\documentclass[letterpaper, 10 pt, conference]{ieeeconf}

\IEEEoverridecommandlockouts                              
\overrideIEEEmargins

\title{Convergence Rate of a Penalty Method for Strongly Convex Problems
with Linear Constraints}

\author{Angelia Nedi\'c and Tatiana~Tatarenko
\thanks{A.\ Nedi\'c (\url{Angelia.Nedich@asu.edu}) is with School of Electrical, Computer and Energy Engineering,
        Arizona State University, USA, and T.\ Tatarenko (\url{tatiana.tatarenko@rmr.tu-darmstadt.de}) is with the Control Methods and Robotics Lab., Technical University Darmstadt, Darmstadt, Germany.}
}

%

\usepackage{euscript}
\usepackage{amsmath, amssymb, epsfig, graphicx, bm}
\usepackage{cases}
\usepackage{float, subfigure}
\usepackage{color}
\usepackage{amsthm}
\usepackage{amsfonts}
\usepackage{psfrag}
\usepackage{cite, url}
\usepackage{breqn}
\usepackage{overpic}
\usepackage[all,cmtip]{xy}
\usepackage{color,hyperref}
\definecolor{darkblue}{rgb}{0.0,0.0,0.6}
\hypersetup{colorlinks,breaklinks,
	linkcolor=darkblue,urlcolor=darkblue,
	anchorcolor=darkblue,citecolor=darkblue}

 \newtheorem{cor}{Corollary}
 \newtheorem{prop}{Proposition}

 \newtheorem{lem}{Lemma}

\def\0{{\bf 0}}
\def\1{{\bf 1}}

\def\bes{\begin{equation*}}
	\def\ees{\end{equation*}}
\def\be{\begin{equation}}
	\def\ee{\end{equation}}
\def\bas{\begin{eqnarray*}}
	\def\eas{\end{eqnarray*}}
\def\ba{\begin{eqnarray}}
	\def\ea{\end{eqnarray}}
\def\bma{\begin{bmatrix}}
	\def\ema{\end{bmatrix}}
\def\bmx{\begin{matrix}}
	\def\emx{\end{matrix}}
\def\ben{\begin{enumerate}}
	\def\een{\end{enumerate}}
\def\bit{\begin{itemize}}
	\def\eit{\end{itemize}}
\def\bet{\begin{tabular}}
	\def\eet{\end{tabular}}

\def\de{\delta}

\def\an#1{{\color{black}#1}}

\def\R{\mathbb{R}}
\def\E{\mathbb{E}}
\def\d{\delta}
\def\la{\langle}
\def\ra{\rangle}
\def\b{{\beta}}
\def\a{\alpha}
\def\g{\gamma}

\def\dist{{\rm dist}}


\def\ex#1{\mathop \mathbb{E}\left[ {#1}\right] }

\begin{document}

\maketitle

\begin{abstract}
We consider an optimization problem with strongly convex objective and linear inequalities constraints. To be able to deal with a large number of constraints we provide a
penalty reformulation of the problem. As penalty functions we use  a version of the one-sided Huber losses.
The smoothness properties of these functions allow us to choose time-varying penalty parameters in such a way
that the incremental procedure with the diminishing step-size converges to the exact solution with the rate $O(1/{\sqrt k})$. To the best of our knowledge, we present the first result on the convergence rate for the penalty-based gradient method, in which the penalty parameters vary with time.

\end{abstract}


\section{Introduction}
In  this  paper,  we  study  the  problem  of minimizing  a
{\it convex} function $f:\R^n\to\R$ over a convex and closed set $X$
that is the intersection of finitely many {\it convex and closed sets} $X_i$, $i=1,\ldots,m$
(where $m\ge 2$ is large),
i.e.,
\ba\label{eq:gprob}
&&\hbox{minimize \ \,} \quad  f(x) \cr
&&\hbox{subject to } \quad  x\in X = \cap_{i=1}^m X_i.
\ea
Throughout the paper, the function $f$ is assumed to be {\it $\mu$-strongly convex} over $\R^n$.
Optimization problems of the form~\eqref{eq:gprob} arise in many areas of research,
such as digital filter settings in communication systems \cite{filter}, energy consumption in Smart Grids \cite{SmartG}, convex  relaxations of  various  combinatorial  optimization  problems in machine learning applications \cite{clustering, matching}.

Our interest is in case when $m$ is large, which prohibits us from using
projected gradient and
augmented Lagrangian methods~\cite{BertsekasConstrOpt},
which require either computation of the (Euclidean) projection or
an estimation of the gradient for the sum of many functions, at each iteration.
To reduce the complexity, one may consider a method that operates on a  single set $X_i$
from the constraint set collection $\{X_1,\ldots,X_m\}$ at each iteration.
Algorithms using random constraint sampling for general convex optimization problems~\eqref{eq:gprob}  have been first considered  in~\cite{Nedich2011} and were extended in~\cite{WangBertsSM} to a broader class of randomization over the sets of constraints. Moreover, the convergence rate analysis is performed
in~\cite{WangBertsSM} to demonstrate that the optimality error diminishes to zero with the rate of $O(1/\sqrt k)$.

In this work, we present an alternative penalty-based approach to guarantee convergence to the optimum while processing a single set $X_i$ per iteration.
A possible reformulation of the problem~\eqref{eq:gprob} is through the use of the indicator functions of
the constraint sets, resulting in the following unconstrained problem
\be\label{eq:reform}
\min_{x\in\R^n}\sum_{i=1}^m \left\{\frac{1}{m}f(x) + \chi_i(x)\right\},
\ee
where $\chi_i(\cdot):\R^n\to\R\cup\{+\infty\}$ is the indicator function of the set $X_i$
(taking value $0$ at the points
$x\in X_i$ and, otherwise, taking value $+\infty$).
The advantage of this reformulation is that the objective function is the sum of convex functions and incremental methods can be employed that compute only a (sub)-gradient of one of the component functions at each iteration.
The traditional incremental methods do not have memory, and their origin can be traced back to work of Kibardin~\cite{Kibardin}. They have been studied for
smooth least-square problems~\cite{Ber96,Ber97,Luo91}, for training the neural networks~\cite{Gri94,Gri00,LuT94}, for smooth convex problems~\cite{Sol98,Tse98} and
for non-smooth convex problems~\cite{GGM06,HeD09,JRJ09,Kiw04,NeB00,NeB01,NeBBor01,Wright08}
(see~\cite{BertsekasPenalty} for a more comprehensive survey of these methods).
However, no rate of convergence to the exact solution has been obtained for such procedures.
Reformulation~\eqref{eq:reform} has been considered in~\cite{Kundu2017} as a departure point
toward an exact penalty reformulation using the set-distance functions.
This exact penalty formulation has been motivated by a simple exact penalty model proposed in~\cite{Bertsekas2011} (using only the set-distance functions)
and a more general penalty model considered in~\cite{BertsekasPenalty}.
In~\cite{Kundu2017}, a lower bound on the penalty parameter
has been identified guaranteeing that the optimal solutions of the penalized problem are also optimal solutions of the original problem~\eqref{eq:reform}. However, this bound depends on a so-called  regularity constant for the constraint set, which might be difficult to estimate. Moreover, the proposed approaches in~\cite{Kundu2017} do not utilize
incremental processing, but rather primal-dual approaches where a full (sub)-gradient of the penalized function
is used.

In contrast to the works mentioned above, this paper deals with a penalized reformulation of the problem~\eqref{eq:gprob}, where the penalty parameter can be gradually increased to guarantee convergence of the incremental procedure to the exact solution. The corresponding penalty functions correspond to a version of the one-sided Huber losses~\cite{Huber}, which are smooth and possess Lipschitz continuous gradients.
In our previous work~\cite{Penalty_arxiv}, we have demonstrated existence of the settings for this penalized reformulation under which the fast incremental algorithms can be applied to achieve convergence to a predefined feasible neighborhood of the optimum with a linear rate. However, to guarantee this convergence, we need to know some problem specific parameters. These parameters might be difficult to estimate in practice.
That is why in this work, we study some new properties of these penalty functions which allow us to set up the time-dependent parameters of the reformulated unconstrained problem such that convergence to the exact optimum with the average rate $O(1/{\sqrt k})$ is guaranteed. To the best of our knowledge, this is the first result on the convergence rate for the penalty-based optimization with time-varying parameters.

\section{Problem formulation and its penalty-based reformulation}
We consider the following optimization problem:
\ba\label{eq:problem}
&&\hbox{minimize \ \,} \quad  f(x) \cr
&&\hbox{subject to } \quad  \la a_i,x\ra - b_i\le 0, \  i=1,\ldots,m,
\ea
where the vectors $a_i$, $i=1,\ldots, m$, are nonzero.
We will assume that the problem is {\it feasible}.
Associated with problem~\eqref{eq:problem}, we
consider a penalized problem
\ba\label{eq:pen-problem}
&&\hbox{minimize \ \,} \quad  F_{\g\d}(x) \cr
&&\hbox{subject to }  \quad x\in\R^n,
\ea
where
\begin{equation}\label{eq:penfun}
F_{\g\d}(x) = f(x) + \frac{\gamma}{m} \sum_{i=1}^{m} h_\d\left(x; a_i, b_i\right).\end{equation}
Here, $\gamma>0$ and $\d\ge0$ are penalty parameters.
The vectors $a_i$ and scalars $b_i$ are the same as those characterizing the constraints in problem~\eqref{eq:problem}.
For a given nonzero vector $a\in\R^n$ and $b\in\R$, the penalty function
$h_\d(\cdot;a,b)$ is given by\footnote{A version of the one-sided Huber losses~\cite{Huber}.}
\begin{align}\label{eq:hfun}
h_\delta(x; a,b) = \begin{cases}
\frac{\la a,x\ra -b}{\|a\|} &\text{ if } \  \la a,x\ra - b>\de,\\
\frac{(\la a,x\ra -b + \de)^2}{4\de\|a\|} &\text{ if } \  -\de\le\la a,x\ra - b\le\de,\\
 0 &\text{ if } \  \la a,x\ra - b<-\de,
\end{cases}
\end{align}
\an{(see Figure~\ref{fig:penalty} for an illustration).}
For any $\d\ge0$, the function $h_\d(x;a,b)$ satisfies the following relations:
\begin{align}\label{eq:hfunineq}
h_\delta(x; a,b) \ge 0\qquad\hbox{for all }x\in\R^n,
\end{align}
\begin{align}\label{eq:hfunineq1}
h_\delta(x; a,b) \le \frac{\delta}{4\|a\|},\qquad\hbox{when }\la a,x\ra\le b,
\end{align}
\begin{align}\label{eq:hfunineq2}
h_\delta(x; a,b) > \frac{\delta}{4\|a\|},\qquad\hbox{when }\la a,x\ra> b.
\end{align}
\begin{figure}[!t]
\centering
\psfrag{-0.5}[c][l]{\scriptsize{$-0.5$}}
\psfrag{0}[c][l]{\scriptsize{$0$}}
\psfrag{0.5}[c][l]{\scriptsize{$0.5$}}
\psfrag{1}[c][l]{\scriptsize{$1$}}
\psfrag{1.5}[c][l]{\scriptsize{$1.5$}}
\psfrag{2}[c][l]{\scriptsize{$2$}}
\psfrag{h}[c][l]{{\Large{$h_{\delta}$}}}
\psfrag{x}[c][b]{\Large{$x$}}
\begin{overpic}[width=1\linewidth]{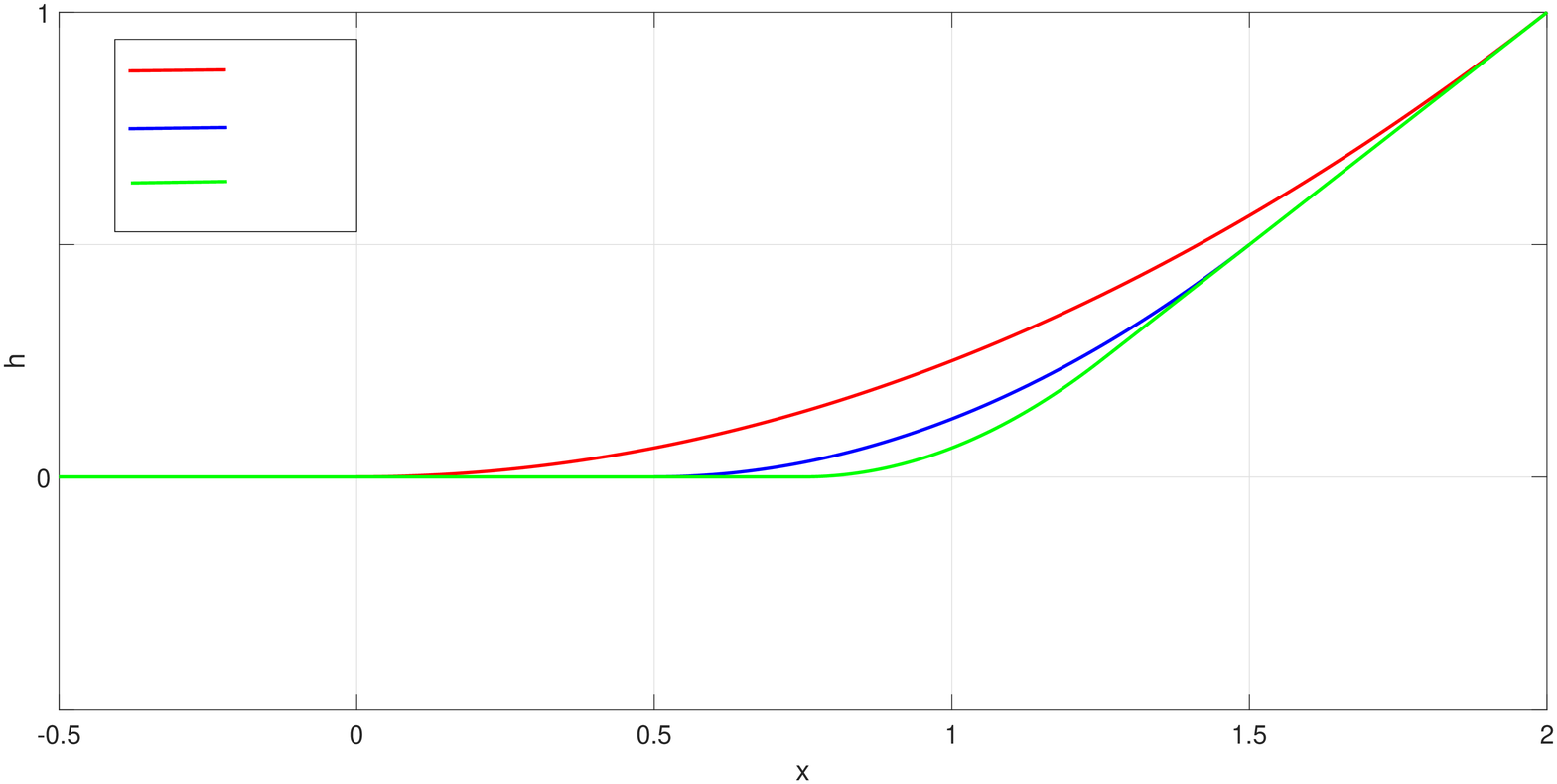}
\put(23,37.5){\scriptsize{$1$}}
\put(23,34){\scriptsize{$\frac12$}}
\put(23,30.5){\scriptsize{$\frac14$}}
\end{overpic}
\caption{Penalty functions $h_\delta(x;1,1)$ for the constraint $x-1\le 0$, $x\in\R$,
with $\d\in\left\{\frac{1}{4},\frac{1}{2},1\right\}$.}
\label{fig:penalty}
\end{figure}

Observe that $h_\delta(x; a,b)$ can be viewed as a composition of
a scalar function
\begin{align}\label{eq:sfun}
p_\d(s)= \begin{cases}
s &\text{ if } \quad s>\d,\\
\frac{(s + \de)^2}{4\d} &\text{ if } \quad -\de\le s \le\de,\\
 0 &\text{ if } \quad s<-\de,
\end{cases}
\end{align}
with a linear function  $x\mapsto \la a,x\ra-b$, which is scaled by $\frac{1}{\|a\|}$.
In particular, we have
\be\label{eq:handp}
h_\delta(x; a,b) =\frac{1}{\|a\|}p_\d(\la a,x\ra-b).\ee
The function $p_\d(s)$ is convex on $\R$ for any $\delta\ge0$.
Thus, the function $h_\d(x;a,b)$ is convex on $\R^n$, implying that
the objective function~\eqref{eq:penfun} of the penalized problem~\eqref{eq:pen-problem}
is convex over $\R^n$ for any $\delta\ge0$ and $\g>0$.

Furthermore, the function
$p_\d(\cdot)$ is twice differentiable for any $\d>0$,
with the first and second derivatives given by
\begin{align}\label{eq:pderiv}
p'_\d(s)= \begin{cases}
1 &\text{ if } \quad s>\d,\\
\frac{(s + \de)}{2\d} &\text{ if } \quad -\de\le s \le\de,\\
 0 &\text{ if } \quad s<-\de,
\end{cases}
\end{align}
\begin{align*}
p''_\d(s) = \begin{cases}
\frac{1}{2\d} &\text{ if } \quad -\d\le s\le\d,\\
 0 &\text{ if } \quad s<-\de \quad \text{or}\quad s>\d.
\end{cases}
\end{align*}
Thus, the function $p(s)$ has Lipschitz continuous derivatives with constant $\frac{1}{2\d}$.
Hence, the function $h_\delta(\cdot; a,b)$ is differentiable for any $\d>0$, and
its gradient is given by
\be\label{eq:gradh}
\nabla h_\delta(x; a,b) =\frac{1}{\|a\|}\,p'_\d(\la a,x\ra-b) a,\ee
which is Lipschitz continuous with a constant $\frac{\|a\|}{2\d}$, i.e.,
\be\label{eq:Lipc-gradh}
\|\nabla h_\delta(x; a,b) -\nabla h_\delta(y; a,b)\| \le \frac{\|a\|}{2\d}\,\|x-y\|
\ee
for all $x,y\in\R^n$.
In view of the definition of the penalty function $F_{\g\d}$ in~\eqref{eq:penfun}
and relation~\eqref{eq:gradh},
we can see that the magnitude of the ``slope" of the penalty function is controlled by the parameter $\g>0$,
while the ratio of the parameters $\g$ and $\d$ is controlling the ``curvature" of the penalty function.

Our choice of the penalty function is motivated by a desire to have the minimizers of
the penalized problem~\eqref{eq:pen-problem} being feasible
for the original problem~\eqref{eq:problem}.
Note that the penalty function proposed above is a version of the one-sided Huber losses.
Originally, the Huber loss functions were introduced in applications of robust regression models
to make them less sensitive to outliers in data in comparison with the squared error loss~\cite{Huber}.
In contrast,
we use this type of penalty function to smoothen the exact penalties based on the distance to the sets $X_i$
proposed in~\cite{BertsekasPenalty}.
Furthermore, an appropriate choice of the parameter $\delta\ge0$ allows us to overcome the limitation
of the smooth penalties based on the squared distances to the sets $X_i$, which typically
provide an infeasible solution (for the original problem), due to a small penalized value around an optimum lying close to the feasibility set boundary~\cite{Siedlecki}.

In what follows, we let $\Pi_Y [x]$ denote the (Euclidean) projection of a point $x$ on a convex closed set $Y$,
i.e.,
\[\dist(x, Y) = \|x - \Pi_{Y} [x]\|.\]

The following lemma and its corollary provide some additional
properties of the penalty function $h_\d(x;a,b)$ that we will use later on.
The proof can be found in~\cite{Penalty_arxiv}.

\begin{lem}\label{lem:penalty}
Given a nonzero vector $a\in\R^n$ and a scalar $b\in\R$, consider the penalty function $h_\d(x;a,b)$ defined in~\eqref{eq:hfun} with $\d\ge0$.
Let $Y=\{x\mid \la a,x\ra-b\le0\}.$
Then, we have for $\d=0$,
\[h_0(x;a,b)=\dist(x,Y) \qquad\hbox{for all $x\in\R^n$},\]
and for any $0<\d \le \d'$,
\[h_\d(x;a,b) \le h_{\d'}(x;a,b) \quad\hbox{for all $x\in\R^n$}.\]
\end{lem}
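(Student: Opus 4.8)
The plan is to establish the two claims separately, treating the $\d=0$ identity first and then the monotonicity in $\d$.

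For the case $\d=0$, I would first observe that the definition~\eqref{eq:hfun} collapses the middle branch onto the single point $\la a,x\ra=b$, so that $h_0(x;a,b)=\frac{(\la a,x\ra-b)^+}{\|a\|}=\frac{\max\{0,\la a,x\ra-b\}}{\|a\|}$. It then remains only to identify this quantity with $\dist(x,Y)$ for the half-space $Y=\{x\mid \la a,x\ra-b\le0\}$. When $x\in Y$ both sides vanish, so I would focus on the case $\la a,x\ra>b$, exhibiting the projection explicitly as $\Pi_Y[x]=x-\tfrac{\la a,x\ra-b}{\|a\|^2}\,a$. I would verify that it is feasible (a direct computation gives $\la a,\Pi_Y[x]\ra=b$, so it lies on the bounding hyperplane) and that $x-\Pi_Y[x]=\tfrac{\la a,x\ra-b}{\|a\|^2}a$ is a positive multiple of the outward normal $a$; the optimality inequality $\la x-\Pi_Y[x],\,y-\Pi_Y[x]\ra=\tfrac{\la a,x\ra-b}{\|a\|^2}(\la a,y\ra-b)\le0$ for every $y\in Y$ then confirms $\Pi_Y[x]$ is indeed the projection. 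Computing $\|x-\Pi_Y[x]\|=\tfrac{\la a,x\ra-b}{\|a\|}$ matches the expression above and settles the first claim.

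For the monotonicity claim, I would invoke the composition identity~\eqref{eq:handp}, which reduces the statement to showing that $p_\d(s)\le p_{\d'}(s)$ for every fixed $s\in\R$ whenever $0<\d\le\d'$, since the positive factor $\tfrac{1}{\|a\|}$ and the common argument $s=\la a,x\ra-b$ are unaffected by $\d$. Fixing $s$ and viewing $p_\d(s)$ from~\eqref{eq:sfun} as a function of $\d>0$, I would show it is nondecreasing by splitting on the sign of $s$: for $s>0$ it sits in the first branch ($p_\d(s)=s$, constant) while $0<\d<s$ and in the middle branch for $\d\ge s$; for $s<0$ it sits in the third branch ($p_\d(s)=0$) while $0<\d<-s$ and in the middle branch for $\d\ge-s$; for $s=0$ it is always in the middle branch. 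On the middle branch a direct differentiation gives
\[\frac{\partial}{\partial\d}\,\frac{(s+\d)^2}{4\d}=\frac{(s+\d)(\d-s)}{4\d^2},\]
and in each relevant range ($\d\ge s>0$, or $\d\ge-s$ with $s<0$, or $s=0$) one checks $s+\d\ge0$ and $\d-s>0$, so this derivative is nonnegative; on the outer branches $p_\d(s)$ is constant in $\d$. Hence $p_\d(s)$ is nondecreasing in $\d$, which yields $p_\d(s)\le p_{\d'}(s)$ and thus the claimed inequality.

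I expect the only delicate point to be the handoff at the branch boundaries $\d=|s|$, where the piecewise definition switches pieces as $\d$ varies. There I would confirm continuity in $\d$ (at $\d=s$ the first and middle branches agree since $\tfrac{(2s)^2}{4s}=s$, and at $\d=-s$ the third and middle branches agree since $s+\d=0$), so that nonnegativity of the one-sided derivatives on each piece genuinely implies that $\d\mapsto p_\d(s)$ is globally nondecreasing. Everything else reduces to a routine sign check.
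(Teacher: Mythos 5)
Your proof is correct. Note that the paper itself does not prove this lemma --- it defers to the reference \cite{Penalty_arxiv} --- so there is no in-paper argument to compare against; on its own terms, your two-part argument is complete: the explicit half-space projection $\Pi_Y[x]=x-\tfrac{\la a,x\ra-b}{\|a\|^2}a$ with the variational-inequality check settles the $\d=0$ identity, and reducing the monotonicity claim via~\eqref{eq:handp} to showing $\d\mapsto p_\d(s)$ is nondecreasing --- constant on the outer branches, with $\tfrac{\partial}{\partial\d}\tfrac{(s+\d)^2}{4\d}=\tfrac{(s+\d)(\d-s)}{4\d^2}\ge0$ on the middle branch and continuity across the switch points $\d=|s|$ --- is exactly the right bookkeeping. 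The only point worth flagging is cosmetic: at $\d=0$ the middle branch of~\eqref{eq:hfun} is formally $0/0$ at the single point $\la a,x\ra=b$, so one should state the convention that it is assigned the value $0$ (its continuous extension) before writing $h_0(x;a,b)=\max\{0,\la a,x\ra-b\}/\|a\|$.
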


The following corollary shows that choosing $f(\hat x)$, for any
feasible $\hat x$, can be used to construct non-empty level sets of $F_{\g\d}$ and $f$.
\begin{cor}\label{cor:lset}
Let $\g>0$ and $\d\ge 0$ be arbitrary, and let
$\hat x$ be a feasible point for the original problem~\eqref{eq:problem}.
Then, for the scalar $t_{\g\d}(\hat x)$ defined by
\[t_{\g\d}(\hat x)= f(\hat x)+\frac{\g\d}{4\min_{1\le i\le m}\|a_i\|},\]
the level set
$\{x\in\R^n\mid F_{\g\d}(x)\le t_{\g\d}(\hat x)\}$ is nonempty and
the solution set $X^*_{\g\d}$ of
the penalized problem~\eqref{eq:pen-problem} is contained in the level set
$\{x\in\R^n\mid f(x)\le t_{\g\d}(\hat x)\}$.
\end{cor}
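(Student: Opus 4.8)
The plan is to use the given feasible point $\hat x$ as a witness for both assertions, relying entirely on the two pointwise bounds \eqref{eq:hfunineq} and \eqref{eq:hfunineq1} on the penalty terms; no strong convexity or compactness is needed. First I would establish nonemptiness of the level set of $F_{\g\d}$ by checking that $\hat x$ itself belongs to it. Since $\hat x$ is feasible for~\eqref{eq:problem}, we have $\la a_i,\hat x\ra\le b_i$ for every $i$, so the hypothesis of \eqref{eq:hfunineq1} is met and each penalty term obeys $h_\d(\hat x;a_i,b_i)\le \frac{\d}{4\|a_i\|}\le \frac{\d}{4\min_{1\le j\le m}\|a_j\|}$, where the second inequality replaces each individual normalization by the worst-case one. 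Summing these $m$ bounds and multiplying by $\frac{\g}{m}$ cancels the averaging factor and produces exactly the additive constant in the definition of $t_{\g\d}(\hat x)$, giving $F_{\g\d}(\hat x)\le f(\hat x)+\frac{\g\d}{4\min_{1\le i\le m}\|a_i\|}=t_{\g\d}(\hat x)$. Hence $\hat x$ lies in the level set, which is therefore nonempty.

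Next I would address the containment. Take any $x^*\in X^*_{\g\d}$; if the solution set is empty the claim is vacuous, so assume it is not. Since $x^*$ minimizes $F_{\g\d}$ over all of $\R^n$ and $\hat x\in\R^n$, the minimizing property together with the bound just derived yields $F_{\g\d}(x^*)\le F_{\g\d}(\hat x)\le t_{\g\d}(\hat x)$. Because every penalty term is nonnegative by \eqref{eq:hfunineq}, the entire penalty sum in $F_{\g\d}(x^*)$ is nonnegative and can be discarded, so $f(x^*)\le F_{\g\d}(x^*)\le t_{\g\d}(\hat x)$. As $x^*$ was arbitrary in $X^*_{\g\d}$, this shows $X^*_{\g\d}\subseteq\{x\in\R^n\mid f(x)\le t_{\g\d}(\hat x)\}$, completing the argument.

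The proof is essentially bookkeeping, so there is no substantial obstacle; the only point deserving care is the worst-case normalization step $\frac{1}{\|a_i\|}\le \frac{1}{\min_{1\le j\le m}\|a_j\|}$, which is precisely what makes the $m$ per-constraint contributions collapse cleanly into the single constant $\frac{\g\d}{4\min_{1\le i\le m}\|a_i\|}$ appearing in $t_{\g\d}(\hat x)$. The two halves then fit together through a single inequality chain anchored at $\hat x$: the nonemptiness claim uses the upper bound \eqref{eq:hfunineq1} on the penalty at the feasible point, and the containment claim uses the lower bound \eqref{eq:hfunineq} at the minimizer, with the minimizing property of $x^*$ linking them.
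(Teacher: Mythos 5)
Your proof is correct, and since the paper defers this proof to the cited arXiv version, there is nothing to contrast it with; the argument you give --- using \eqref{eq:hfunineq1} at the feasible point $\hat x$ to show $F_{\g\d}(\hat x)\le t_{\g\d}(\hat x)$, then combining the minimality of any $x^*\in X^*_{\g\d}$ with the nonnegativity bound \eqref{eq:hfunineq} to get $f(x^*)\le F_{\g\d}(x^*)\le F_{\g\d}(\hat x)\le t_{\g\d}(\hat x)$ --- is evidently the intended one, and it is exactly how the corollary is later invoked in Lemma~\ref{lema-solsbded}.
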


In the next section, we will consider the settings for the penalty parameters under which the incremental gradient-based procedure for the unconstrained problem~\eqref{eq:pen-problem} leads to the solution of the original constrained problem~\eqref{eq:problem}.
Moreover, we will establish the convergence rate of this procedure.

\section{Penalized optimization with time-varying parameters}
We consider sequences $\{\delta_k\}$ and $\{\g_k\}$ of positive scalars,
and we denote the corresponding penalty
function $F_{\d_k\g_k}(x)$ simply by $F_k$,
i.e.,
\begin{equation}
F_k(x) = f(x) + \frac{\g_k}{m} \sum_{i=1}^{m} h_k\left(x; a_i, b_i\right),
\end{equation}
where we use $h_k$ to denote the function $h_{\d_k}$.
When $f$ is strongly convex, each of these penalty functions has a unique solution,
denoted by $x_k^*$, and the original problem also has a unique solution $x^*\in X$.

First, we derive an upper bound for the distance between $x_k^*$ and  $x_{k+1}^*$.
To provide such a bound, we use some properties of the gradients of $h_k$,
as given in the following lemma.

\begin{lem}\label{lem:pderiv}
Consider the function $h_\d(\cdot; a,b)$ as given in~\eqref{eq:hfun}.
Then, we have
\[\|\nabla h_\d(x;a,b)\|\le 1\qquad\hbox{for all }x\in\R^n.\]
If $\d_1 \ge \d_2$, then
\[\max_{x\in\R^n}\|\nabla h_{\d_1}(x;a,b)- \nabla h_{\d_2}(x;a,b)\|
\le \frac{\d_1-\d_2}{2\d_1}.\]
\end{lem}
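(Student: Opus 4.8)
The plan is to work through the scalar function $p_\d$ first, since by~\eqref{eq:handp} we have $\nabla h_\d(x;a,b)=\frac{1}{\|a\|}p_\d'(\la a,x\ra-b)\,a$, so that $\|\nabla h_\d(x;a,b)\|=|p_\d'(\la a,x\ra-b)|$. For the first bound, I would simply read off from the formula~\eqref{eq:pderiv} for $p_\d'$ that $p_\d'(s)\in[0,1]$ for every $s$: it is $1$ when $s>\d$, it is $0$ when $s<-\d$, and on the middle piece $\frac{s+\d}{2\d}$ ranges over $[0,1]$ as $s$ ranges over $[-\d,\d]$. Hence $\|\nabla h_\d(x;a,b)\|=|p_\d'(\la a,x\ra-b)|\le 1$ for all $x$, which gives the first claim.

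For the second inequality, the key reduction is again to the scalar derivatives: since both gradients share the same direction $a/\|a\|$, we get
\begin{align*}
\|\nabla h_{\d_1}(x;a,b)-\nabla h_{\d_2}(x;a,b)\|
= |p_{\d_1}'(s)-p_{\d_2}'(s)|,
\end{align*}
where $s=\la a,x\ra-b$. So the whole problem becomes: maximize $|p_{\d_1}'(s)-p_{\d_2}'(s)|$ over $s\in\R$ when $\d_1\ge\d_2$. The plan is to partition the real line according to the breakpoints $\pm\d_1$ and $\pm\d_2$ (note $\d_2\le\d_1$, so $-\d_1\le-\d_2\le\d_2\le\d_1$) and to evaluate the difference on each of the resulting intervals. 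On the two outer regions $s<-\d_1$ and $s>\d_1$ both derivatives agree (both $0$ or both $1$), so the difference vanishes there. The interesting regions are $-\d_1\le s\le -\d_2$, $-\d_2\le s\le\d_2$, and $\d_2\le s\le\d_1$, where at least one derivative is still on its linear ramp.

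The main obstacle is the case analysis on the middle regions: I would compute $p_{\d_1}'(s)-p_{\d_2}'(s)$ explicitly on each piece and track where the largest absolute value occurs. On $-\d_1\le s\le-\d_2$ we have $p_{\d_2}'(s)=0$ while $p_{\d_1}'(s)=\frac{s+\d_1}{2\d_1}$, which is nonnegative and increasing; on $-\d_2\le s\le\d_2$ both are linear ramps and the difference $\frac{s+\d_1}{2\d_1}-\frac{s+\d_2}{2\d_2}$ is a linear (in fact affine) function of $s$; and on $\d_2\le s\le\d_1$ we have $p_{\d_2}'(s)=1$ while $p_{\d_1}'(s)=\frac{s+\d_1}{2\d_1}\le 1$. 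Since the difference is piecewise affine and continuous in $s$, its extremum over each closed interval is attained at an endpoint, so it suffices to evaluate at the breakpoints $s\in\{-\d_1,-\d_2,\d_2,\d_1\}$. I expect the maximal gap to occur at $s=\d_2$ (or symmetrically), where $p_{\d_2}'=1$ and $p_{\d_1}'(\d_2)=\frac{\d_2+\d_1}{2\d_1}$, giving $1-\frac{\d_1+\d_2}{2\d_1}=\frac{\d_1-\d_2}{2\d_1}$, which is exactly the claimed bound. Comparing this value against the differences at the other breakpoints (which I expect to be no larger) then completes the proof.
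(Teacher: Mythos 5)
Your proposal is correct and follows essentially the same route as the paper: reduce to the scalar derivatives via $\|\nabla h_{\d_1}(x;a,b)-\nabla h_{\d_2}(x;a,b)\|=|p'_{\d_1}(s)-p'_{\d_2}(s)|$, then do the case analysis over the intervals determined by $\pm\d_1,\pm\d_2$ and maximize on each piece (your piecewise-affine/endpoint observation is just a slightly slicker way of doing the per-interval maximization the paper carries out explicitly). The breakpoint evaluations you anticipate do come out as expected, with the maximum $\frac{\d_1-\d_2}{2\d_1}$ attained at $s=\pm\d_2$.
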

\begin{proof}
In view of relation~\eqref{eq:gradh} we have that
\[\|\nabla h_\d(x;a,b)\|=|p'_\d(\la a,x\ra-b)|,\]
where \an{$p_\d$} is given in~\eqref{eq:sfun}.
\an{The first derivative of the function $p_\d$ (see~\eqref{eq:pderiv})
is nonnegative and satisfies $p'_\d(s)\le 1$.
Hence, $\|\nabla h_\d(x;a,b)\|\le 1$ (see~\eqref{eq:gradh}).}

Now, let $\d_1\ge \d_2$. \an{If $\d_1=\d_2$, then
the relation}
$\max_{x\in\R^n}\|\nabla h_{\d_1}(x;a,b)- \nabla h_{\d_2}(x;a,b)\|
\le \frac{\d_1-\d_2}{2\d_1}$ holds trivially, so assume that
$\d_1 >\d_2$.
By relation~\eqref{eq:gradh}, we have
\begin{equation}\label{eq:33}
\|\nabla h_{\d_1}(x;a,b)- \nabla h_{\d_2}(x;a,b)\|
=|p'_{\d_1}(s)-p'_{\d_2}(s)|.
\end{equation}
\an{By using the expression for $p'_{\d}$, as given in~\eqref{eq:pderiv}, we have that}
the difference $p'_{\d_1}(s)-p'_{\d_2}(s)$ is given by
\begin{align*}
p'_{\d_1}(s)-p'_{\d_2}(s)=
\begin{cases}
 0 &\text{ if }  s<-\d_1 \ \text{ or } \  s\ge\d_1,\\
\frac{s + \d_1}{2\d_1} &\text{ if }  -\d_1\le s <-\d_2\\
\frac{s + \d_1}{2\d_1} - \frac{s + \d_2}{2\d_2} &\text{ if }  -\d_2\le s <\d_2,\\
\frac{s + \d_1}{2\d_1}-1&\text{ if }  \d_2 \le s <\d_1.
 \end{cases}
\end{align*}
Hence,
\begin{align*}
p'_{\d_1}(s)-p'_{\d_2}(s)=
\begin{cases}
 0 &\text{ if }  s<-\d_1\ \text{ or } \  s\ge\d_1,\\
\frac{s + \d_1}{2\d_1} &\text{ if } -\d_1\le s <-\d_2\\
\frac{(\d_2-\d_1)s}{2\d_1\d_2} &\text{ if }  -\d_2\le s <\d_2,\\
\frac{s - \d_1}{2\d_1}&\text{ if }  \d_2 \le s <\d_1.
 \end{cases}
\end{align*}
For the cases $-\d_1\le s <-\d_2$ and $\d_2 \le s \le\d_1$,
we have
\[\max_{s\in [\d_1,\d_2)\cup[\d_2,\d_1)}|p'_{\d_1}(s)-p'_{\d_2}(s)|
=\frac{\d_1-\d_2}{2\d_1} .\]
For the case $-\d_2\le s <\d_2$, we have
\[\max_{s\in [-\d_2,\d_2)}|p'_{\d_1}(s)-p'_{\d_2}(s)|
=\max_{s\in [-\d_2,\d_2)}\left|\frac{(\d_2-\d_1)s}{2\d_1\d_2} \right|
=\frac{\d_1-\d_2}{2\d_1}.\]
Therefore, we have
\[\max_{s\in\R}|p'_{\d_1}(s)-p'_{\d_2}(s)|\le \frac{\d_1-\d_2}{2\d_1},\]
which when combined with relation~\eqref{eq:33}
implies the desired relation.
\end{proof}

Our next lemma provides an upper bound on $\|x_{k+1}^* - x_k^*\|$,
which is critical for establishing the convergence of the method later on.

\begin{lem}\label{lem-improve}
Let $f$ be strongly convex with a constant $\mu>0$.
Let $\{\g_k\}$ and $\{\d_k\}$ be sequences of positive scalars, such that
\[\g_{k+1}\ge \g_k,\qquad \d_{k+1}\le \d_k \quad\hbox{ for all }k\ge1.\]
Then, we have for all $k\ge 1$
\begin{align*}
\mu\|x_k^*-x_{k+1}^*\|
\le (\g_{k+1}-\g_k)+\g_k\,\frac{\d_k-\d_{k+1}}{2\d_k}.
\end{align*}
\end{lem}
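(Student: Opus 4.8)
The plan is to combine the first-order optimality conditions for the two minimizers with the strong convexity of the penalized objectives. Since $f$ is $\mu$-strongly convex and each $h_k = h_{\d_k}$ is differentiable for $\d_k>0$, the function $F_k$ is $\mu$-strongly convex and has the unique minimizer $x_k^*$. Abbreviate the gradient of the penalty part by $G_k(x) = \frac{\g_k}{m}\sum_{i=1}^m \nabla h_k(x;a_i,b_i)$, so that $\partial F_k = \partial f + G_k$. The stationarity condition $0\in\partial F_k(x_k^*)$ then yields a subgradient $s_k\in\partial f(x_k^*)$ with $s_k = -G_k(x_k^*)$.

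The key step is to evaluate both penalty gradients at the common point $x_k^*$. Observe that $s_k + G_{k+1}(x_k^*) = G_{k+1}(x_k^*) - G_k(x_k^*)$ is a valid subgradient of $F_{k+1}$ at $x_k^*$, while $0$ is a subgradient of $F_{k+1}$ at its minimizer $x_{k+1}^*$. Invoking the monotonicity of the subdifferential of the $\mu$-strongly convex function $F_{k+1}$ gives
\[\la G_{k+1}(x_k^*) - G_k(x_k^*),\, x_k^* - x_{k+1}^*\ra \ge \mu\,\|x_k^* - x_{k+1}^*\|^2.\]
Applying the Cauchy--Schwarz inequality to the left-hand side and dividing by $\|x_k^* - x_{k+1}^*\|$ (the case $x_k^*=x_{k+1}^*$ being trivial) reduces the claim to the bound $\mu\,\|x_k^* - x_{k+1}^*\| \le \|G_{k+1}(x_k^*) - G_k(x_k^*)\|$.

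To bound the right-hand side, I would split each summand as $\g_{k+1}\nabla h_{k+1} - \g_k\nabla h_k = (\g_{k+1}-\g_k)\nabla h_{k+1} + \g_k(\nabla h_{k+1} - \nabla h_k)$, evaluated at $x_k^*$, and then take norms with the triangle inequality over $i=1,\dots,m$. Lemma~\ref{lem:pderiv} supplies the two needed estimates: $\|\nabla h_{k+1}(x;a_i,b_i)\|\le 1$, and, since $\d_{k+1}\le\d_k$, $\|\nabla h_{k+1}(x;a_i,b_i) - \nabla h_k(x;a_i,b_i)\| \le \frac{\d_k-\d_{k+1}}{2\d_k}$. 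Because $\g_{k+1}\ge\g_k$, averaging over $i$ collapses the bound to exactly $(\g_{k+1}-\g_k) + \g_k\frac{\d_k-\d_{k+1}}{2\d_k}$, which is the asserted right-hand side.

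The main obstacle, and the reason for evaluating at the shared point $x_k^*$, is that Lemma~\ref{lem:pderiv} only compares $\nabla h_{\d_1}$ and $\nabla h_{\d_2}$ at the same argument; a naive comparison of $G_{k+1}(x_{k+1}^*)$ with $G_k(x_k^*)$ leaves the penalty gradients at two different points and cannot be estimated directly. The construction above circumvents this by letting the strong convexity of $F_{k+1}$ absorb the mismatch in arguments, so that only a same-point gradient difference remains to be bounded. One should also confirm the subdifferential sum rule $\partial F_k = \partial f + G_k$, which is immediate here since the penalty part is finite-valued and differentiable.
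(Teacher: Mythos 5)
Your proof is correct and follows essentially the same route as the paper's: the same decomposition $(\g_{k+1}-\g_k)\nabla h_{k+1} + \g_k(\nabla h_{k+1}-\nabla h_k)$ and the same two bounds from Lemma~\ref{lem:pderiv}. The only difference is organizational: you invoke the strong monotonicity of $\partial F_{k+1}$ between $x_k^*$ and $x_{k+1}^*$ in a single step, where the paper first uses the strong convexity of $f$ and then separately the monotonicity of $\nabla h_{k+1}$ to discard the cross term $\la \nabla h_{k+1}(x_{k+1}^*;a_i,b_i)-\nabla h_{k+1}(x_k^*;a_i,b_i),\, x_k^*-x_{k+1}^*\ra\le 0$ --- a slightly cleaner packaging of the identical argument.
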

\begin{proof}
Consider an arbitrary $k\ge1$ and assume without loss of generality that
$x_k^*\ne x_{k+1}^*$ (for otherwise the stated relation holds trivially).
The optimality conditions
$\nabla F_k(x_k^*)=0$ and $\nabla F_{k+1}(x_{k+1}^*)=0$
yield, respectively,
\[ \nabla f(x_k^*) + \frac{\g_k}{m} \sum_{i=1}^{m} \nabla h_k\left(x_k^*; a_i, b_i\right)=0,\]
\[ \nabla f(x_{k+1}^*) + \frac{\g_{k+1}}{m} \sum_{i=1}^{m} \nabla h_{k+1}\left(x_{k+1}^*; a_i, b_i\right)=0.\]
By subtracting the last relation from the preceding one, and
by re-arranging the terms, we obtain
\begin{align*}
 \nabla f(x_k^*) - \nabla f(x_{k+1}^*)
= &\frac{\g_{k+1}}{m} \sum_{i=1}^{m} \nabla h_{k+1}\left(x_{k+1}^*; a_i, b_i\right)\cr&
-\frac{\g_k}{m} \sum_{i=1}^{m} \nabla h_k\left(x_k^*; a_i, b_i\right).
\end{align*}
 By adding and subtracting
$\frac{\g_k}{m} \sum_{i=1}^{m} \nabla h_{k+1}\left(x_{k+1}^*; a_i, b_i\right)$, we have
\begin{align*}
&\nabla f(x_k^*) - \nabla f(x_{k+1}^*)
= \frac{\g_{k+1}-\g_k}{m} \sum_{i=1}^{m} \nabla h_{k+1}\left(x_{k+1}^*; a_i, b_i\right)\cr
&+\frac{\g_k}{m} \sum_{i=1}^{m}
\left(\nabla h_{k+1}\left(x_{k+1}^*; a_i, b_i\right)
-\nabla h_k\left(x_k^*; a_i, b_i\right)\right).\end{align*}
Hence,
\begin{align*}
&\la \nabla f(x_k^*) - \nabla f(x_{k+1}^*),x_k^*-x_{k+1}^*\ra\cr
&= \frac{\g_{k+1}-\g_k}{m} \sum_{i=1}^{m}
\la \nabla h_{k+1}\left(x_{k+1}^*; a_i, b_i\right),x_k^*-x_{k+1}^*\ra +\frac{\g_k}{m}  \cr
&\times\sum_{i=1}^{m}
\la \nabla h_{k+1}\left(x_{k+1}^*; a_i, b_i\right)
-\nabla h_k\left(x_k^*; a_i, b_i\right), x_k^*-x_{k+1}^*\ra.\end{align*}
By the strong convexity of $f$, it follows that
\begin{align*}
&\mu\|x_k^*-x_{k+1}^*\|^2\cr
&\le \frac{\g_{k+1}-\g_k}{m} \sum_{i=1}^{m}
\la \nabla h_{k+1}\left(x_{k+1}^*; a_i, b_i\right),x_k^*-x_{k+1}^*\ra +\frac{\g_k}{m} \cr
&\times\sum_{i=1}^{m}
\la \nabla h_{k+1}\left(x_{k+1}^*; a_i, b_i\right)
-\nabla h_k\left(x_k^*; a_i, b_i\right), x_k^*-x_{k+1}^*\ra.\end{align*}
By adding and subtracting $\nabla h_{k+1}\left(x_k^*; a_i, b_i\right)$ in the last terms, we obtain
\begin{align*}
&\mu\|x_k^*-x_{k+1}^*\|^2\cr
&\le \frac{\g_{k+1}-\g_k}{m} \sum_{i=1}^{m}
\la \nabla h_{k+1}\left(x_{k+1}^*; a_i, b_i\right),x_k^*-x_{k+1}^*\ra +\frac{\g_k}{m}\cr
&\times \sum_{i=1}^{m}
\la \nabla h_{k+1}\left(x_{k+1}^*; a_i, b_i\right) - \nabla h_{k+1}\left(x_k^*; a_i, b_i\right), x_k^*-x_{k+1}^*\ra\cr
&+\frac{\g_k}{m} \sum_{i=1}^{m}
\la \nabla h_{k+1}\left(x_k^*; a_i, b_i\right)
-\nabla h_k\left(x_k^*; a_i, b_i\right), x_k^*-x_{k+1}^*\ra.\end{align*}
By the convexity of $h_{k+1}$, we have for all $i$,
\begin{align*}
\la \nabla h_{k+1}\left(x_{k+1}^*; a_i, b_i\right) - \nabla h_{k+1}\left(x_k^*; a_i, b_i\right),
x_k^*-x_{k+1}^*\ra\le0,
\end{align*}
implying that
\begin{align*}
&\mu\|x_k^*-x_{k+1}^*\|^2\cr
&\le \frac{\g_{k+1}-\g_k}{m} \sum_{i=1}^{m}
\la \nabla h_{k+1}\left(x_{k+1}^*; a_i, b_i\right),x_k^*-x_{k+1}^*\ra \cr
&+\frac{\g_k}{m} \sum_{i=1}^{m}
\la \nabla h_{k+1}\left(x_k^*; a_i, b_i\right)
-\nabla h_k\left(x_k^*; a_i, b_i\right), x_k^*-x_{k+1}^*\ra.\end{align*}
Since $\g_{k+1}\ge \g_k>0$, by using Cauchy-Schwarz inequality, we obtain
\begin{align*}
&\mu\|x_k^*-x_{k+1}^*\|^2\cr
&\le \frac{\g_{k+1}-\g_k}{m} \sum_{i=1}^{m}
\|\nabla h_{k+1}\left(x_{k+1}^*; a_i, b_i\right)\|\,\|x_k^*-x_{k+1}^*\|\cr
&+\frac{\g_k}{m} \sum_{i=1}^{m}
\|\nabla h_{k+1}\left(x_k^*; a_i, b_i\right)-\nabla h_k\left(x_k^*; a_i, b_i\right)\|\cr
&\qquad\qquad\qquad\qquad\qquad\times \|x_k^*-x_{k+1}^*\|.
\end{align*}
By Lemma~\ref{lem:pderiv}, we have that
$\|\nabla h_{k+1}\left(x_{k+1}^*; a_i, b_i\right)\|\le 1$ implying that
\begin{align*}
&\mu\|x_k^*-x_{k+1}^*\|^2
\le (\g_{k+1}-\g_k)\|x_k^*-x_{k+1}^*\|+\frac{\g_k}{m}\cr
& \times\sum_{i=1}^{m}
\|\nabla h_{k+1}\left(x_k^*; a_i, b_i\right)-\nabla h_k\left(x_k^*; a_i, b_i\right)\|\, \|x_k^*-x_{k+1}^*\|.
\end{align*}
Since $\d_{k+1}\le \d_k$, by Lemma~\ref{lem:pderiv} we have that
for all $i$,
\[\max_{x\in\R^n}\|\nabla h_{\d_k}(x;a_i,b_i)- \nabla h_{\d_{k+1}}(x;a_i,b_i)\|
\le \frac{\d_k-\d_{k+1}}{2\d_k}.\]
Hence,
\begin{align*}
&\mu\|x_k^*-x_{k+1}^*\|^2\cr
&\le (\g_{k+1}-\g_k)\|x_k^*-x_{k+1}^*\|+\g_k\,\frac{\d_k-\d_{k+1}}{2\d_k} \|x_k^*-x_{k+1}^*\|.
\end{align*}
Dividing by $\|x_k^*-x_{k+1}^*\|$, we obtain
\begin{align*}
\mu\|x_k^*-x_{k+1}^*\|
\le (\g_{k+1}-\g_k)+\g_k\,\frac{\d_k-\d_{k+1}}{2\d_k}.
\end{align*}
\end{proof}

Our next result provides relations for the points $x_k^*$ and
the optimal solution $x^*$ of the original problem.

\begin{lem}\label{lema-solsbded}
Let $f$ be strongly convex with a constant \hbox{$\mu>0$}.
Assume that the sequence $\{\d_k\}$ and $\{\g_k\}$ are such that $\g_k>0$, $\d_k>0$
and
$\g_k\d_k\le c$ for all $k$.
Then, the sequence $\{x_k^*\}$ of solutions (to the
corresponding penalized problems $\min_{x\in \R^n} F_k(x)$)
is contained in the level set
\[\left\{x\in\R^n\mid f(x)\le f(x^*)+\frac{c}{4\a_{\min}}\right\},\]
where $x^*$ is the solution of the original problem and
$\a_{\min}=\min_{1\le i\le m}\|a_i\|$.
In particular, the sequence $\{x_k^*\}$ is bounded.
\end{lem}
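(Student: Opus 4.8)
The plan is to invoke Corollary~\ref{cor:lset} directly, choosing the feasible point $\hat x$ to be the optimal solution $x^*$ of the original problem~\eqref{eq:problem}. Since $x^*$ is feasible and each pair $(\g_k,\d_k)$ satisfies $\g_k>0$ and $\d_k>0$, the hypotheses of the corollary hold for every $k$. By definition, $x_k^*$ is the (unique) minimizer of $F_k=F_{\g_k\d_k}$, so $x_k^*\in X_{\g_k\d_k}^*$. The corollary then places $X_{\g_k\d_k}^*$, and hence $x_k^*$, inside the level set $\{x\mid f(x)\le t_{\g_k\d_k}(x^*)\}$, which gives
\[f(x_k^*)\le t_{\g_k\d_k}(x^*)=f(x^*)+\frac{\g_k\d_k}{4\a_{\min}}.\]

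First I would use the uniform bound $\g_k\d_k\le c$ to remove the dependence on $k$ from the right-hand side: substituting into the inequality above yields $f(x_k^*)\le f(x^*)+\frac{c}{4\a_{\min}}$ for every $k$. This is exactly the assertion that every $x_k^*$ lies in the level set $\left\{x\in\R^n\mid f(x)\le f(x^*)+\frac{c}{4\a_{\min}}\right\}$, which establishes the first claim.

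For the boundedness claim I would appeal to the quadratic growth that strong convexity forces on $f$. Let $\bar x$ denote the unconstrained minimizer of $f$, which exists and is unique because $f$ is $\mu$-strongly convex; then $\nabla f(\bar x)=0$, and strong convexity gives $f(x)\ge f(\bar x)+\frac{\mu}{2}\|x-\bar x\|^2$ for all $x\in\R^n$. Consequently, any sublevel set $\{x\mid f(x)\le M\}$ is contained in the ball of radius $\sqrt{2(M-f(\bar x))/\mu}$ centered at $\bar x$, and is therefore bounded. Applying this with $M=f(x^*)+\frac{c}{4\a_{\min}}$ shows that the level set containing $\{x_k^*\}$ is bounded, whence $\{x_k^*\}$ is bounded.

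The argument is essentially a one-line application of Corollary~\ref{cor:lset} followed by the uniform estimate $\g_k\d_k\le c$, so there is no substantive obstacle. The only point requiring a little care is the boundedness conclusion, where one must not conflate $x^*$, the constrained minimizer, with the unconstrained minimizer $\bar x$ of $f$: it is the latter that supplies the global quadratic lower bound needed to bound the sublevel set.
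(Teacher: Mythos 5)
Your proposal is correct and follows essentially the same route as the paper: apply Corollary~\ref{cor:lset} with $\hat x=x^*$ to get $f(x_k^*)\le f(x^*)+\frac{\g_k\d_k}{4\a_{\min}}$, use $\g_k\d_k\le c$ to obtain a uniform level set, and conclude boundedness from the boundedness of the level sets of a strongly convex function. The only difference is that you spell out the quadratic-growth argument for why those level sets are bounded, which the paper simply asserts.
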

\begin{proof}
By Corollary~\ref{cor:lset}, where $\hat x=x^*$,
\hbox{$\d=\d_k$, $\g=\g_k$,}
and
$t_{\g\d}(\hat x)$ is replaced with
$f(x^*)+\frac{\g_k\d_k}{4\a_{\min}}$, we obtain
that
$f(x_k^*)\le f(x^*)+\frac{\g_k\d_k}{4\a_{\min}}$. Since \hbox{$\g_k\d_k\le c$,}
it follows that $\{x^*_k\}$ is contained in
the level set $\left\{x\in\R^n\mid f(x)\le f(x^*)+\frac{c}{4\a_{\min}}\right\}$.
Since $f$ is strongly convex, its level sets are bounded, thus implying that
$\{x_k^*\}$ is bounded.
\end{proof}

We next consider a set of conditions on parameters $\d_k$ and $\g_k$ that will ensure
that the sequence $\{x_k^*\}$ converges to $x^*$ as $k\to\infty$.
In what follows, we will use the projections of
the points $x^*_k$ on the feasible set, which
we denote by $p_k$, i.e., $p_k=\Pi_X[x_k^*]$.
Under the assumptions of Lemma~\ref{lema-solsbded},
the sequence$\{x_k^*\}$ is bounded, and so is the sequence
$\{p_k\}$ of the projections of $x_k^*$'s on $X$.
Let $R$ be large enough so that $\{x_k\}\subset \mathbb{B}(0,R)$ and
$\{p_k\}\subset \mathbb{B}(0,R)$, where $\mathbb{B}(0,R)$ denotes the ball centered at the
origin with the radius $R$.
The subgradients of $f(x)$ for $x\in \mathbb{B}(0,R)$ are bounded, and let
$L$ be the maximum norm
of the subgradients of $f(x)$ over $x\in \mathbb{B}(0,R)$,
i.e.,
\begin{equation}\label{eq:sgdbound}
L=\max\{\|s\|\mid s\in\partial f(x), \|x\|\le R\}.
\end{equation}

We have the following lemma.
\begin{lem}\label{lema-sols}
Let $f$ be strongly convex with a constant $\mu>0$.
Assume that the sequence $\{\d_k\}$ and $\{\g_k\}$ are such that $\g_k>0$, $\d_k>0$
and $\g_k\d_k\le c$ for all $k$. Let $L$ be given by~\eqref{eq:sgdbound}. Then,
for all $k$, we have
\begin{align*}
 \frac{\mu}{2}\|x^* -x_k^*\|^2 + \frac{\mu}{2}\|x^*-p_k\|^2
+\left(\frac{\g_k}{4m\b} - L\right) &\dist(x_k^*,X)\cr
&\le \frac{\g_k\d_k}{4\a_{\min}},
\end{align*}
where $p_k=\Pi_X[x_k^*]$ for all $k$, and
$\a_{\min}=\min_{1\le i\le m}\|a_i\|$.
\end{lem}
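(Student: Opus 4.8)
The plan is to compare the penalized objective $F_k$ at the feasible optimum $x^*$ with its value at the penalty minimizer $x_k^*$, using that $F_k$ inherits $\mu$-strong convexity from $f$ (its penalty part being convex). Since $x_k^*$ minimizes the $\mu$-strongly convex $F_k$, we have
\[
F_k(x^*)\ge F_k(x_k^*)+\frac{\mu}{2}\|x^*-x_k^*\|^2 .
\]
First I would bound the left-hand side from above. Because $x^*$ is feasible, $\langle a_i,x^*\rangle\le b_i$ for every $i$, so~\eqref{eq:hfunineq1} gives $h_k(x^*;a_i,b_i)\le\frac{\delta_k}{4\|a_i\|}\le\frac{\delta_k}{4\alpha_{\min}}$, and therefore $F_k(x^*)\le f(x^*)+\frac{\gamma_k\delta_k}{4\alpha_{\min}}$ (this is essentially the content of Corollary~\ref{cor:lset}).

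The substance is the lower bound on $F_k(x_k^*)=f(x_k^*)+\frac{\gamma_k}{m}\sum_{i=1}^m h_k(x_k^*;a_i,b_i)$, which I would split into a function part and a penalty part. For the function part I would route through the projection $p_k=\Pi_X[x_k^*]$: the subgradient inequality for $f$ at $p_k$ combined with the uniform bound $L$ on $\partial f$ over $\mathbb{B}(0,R)$ (note $p_k\in\mathbb{B}(0,R)$) yields $f(x_k^*)\ge f(p_k)-L\,\dist(x_k^*,X)$; and since $x^*$ minimizes the $\mu$-strongly convex $f$ over $X$ while $p_k\in X$, the optimality condition gives $f(p_k)\ge f(x^*)+\frac{\mu}{2}\|x^*-p_k\|^2$. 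For the penalty part I would invoke Lemma~\ref{lem:penalty}: since $\delta_k>0$, each term satisfies $h_k(x_k^*;a_i,b_i)\ge h_0(x_k^*;a_i,b_i)=\dist(x_k^*,X_i)$ pointwise, so $\sum_{i=1}^m h_k(x_k^*;a_i,b_i)\ge\sum_{i=1}^m\dist(x_k^*,X_i)$.

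At this point the remaining—and main—step is to convert the sum of per-constraint distances into the distance to the intersection $X=\cap_i X_i$. This is exactly where the regularity constant $\beta$ of the constraint collection enters, supplying $\sum_{i=1}^m\dist(x_k^*,X_i)\ge\frac{1}{4\beta}\dist(x_k^*,X)$ and hence $\frac{\gamma_k}{m}\sum_i h_k(x_k^*;a_i,b_i)\ge\frac{\gamma_k}{4m\beta}\dist(x_k^*,X)$. I expect this conversion to be the crux, as it is the only place a global property of the feasible set (rather than a constraint-by-constraint estimate) is used, and it is responsible for the constant $\frac{1}{4m\beta}$ appearing in the claim.

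Finally I would assemble the pieces. Substituting the upper bound for $F_k(x^*)$ and the lower bound for $F_k(x_k^*)$ into the strong-convexity inequality and cancelling $f(x^*)$ from both sides leaves
\[
\frac{\mu}{2}\|x^*-x_k^*\|^2+\frac{\mu}{2}\|x^*-p_k\|^2+\left(\frac{\gamma_k}{4m\beta}-L\right)\dist(x_k^*,X)\le\frac{\gamma_k\delta_k}{4\alpha_{\min}},
\]
which is the claim. I would close by noting that the hypothesis $\gamma_k\delta_k\le c$ is not used directly in this chain; it enters only through Lemma~\ref{lema-solsbded}, which guarantees that $\{x_k^*\}$ (hence $\{p_k\}$) is bounded, so that the radius $R$ and the subgradient bound $L$ in~\eqref{eq:sgdbound} are well defined uniformly in $k$.
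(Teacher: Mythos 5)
Your proposal is correct and follows essentially the same route as the paper: strong convexity of $F_k$ at its minimizer, the bound $h_k(x^*;a_i,b_i)\le\frac{\d_k}{4\a_{\min}}$ from feasibility of $x^*$, the detour through $p_k=\Pi_X[x_k^*]$ with the optimality condition $\la\nabla f(x^*),p_k-x^*\ra\ge 0$ and the subgradient bound $L$, and finally Hoffman's lemma to pass from $\sum_i\dist(x_k^*,X_i)$ to $\dist(x_k^*,X)$. The only cosmetic difference is where the factor $4$ is absorbed (the paper uses $h_k\ge\frac14\dist(\cdot,X_i)$ and Hoffman with constant $\beta$, while you use $h_k\ge\dist(\cdot,X_i)$ and fold the $4$ into the regularity constant), which yields the same, or an even slightly stronger, final inequality.
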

\begin{proof}
Since each $F_k$ is strongly convex with the constant $\mu>0$,
by the optimality of $x_k^*$ and by the definition of $F_k$, we have
\begin{align}\label{eq:11}
\frac{\mu}{2}&\|x^* -x_k^*\|^2\le 
f(x^*)-f(x_k^*) \cr
&+ \frac{\g_k}{m} \sum_{i=1}^{m} h_k\left(x^*; a_i, b_i\right)
-\frac{\g_k}{m} \sum_{i=1}^{m} h_k\left(x_k^*; a_i, b_i\right).
\end{align}
By adding and subtracting $f(p_k)$,
we have
\begin{align*}
 f(x^*)-f(x_k^*)=f(x^*)-f(p_k) + f(p_k) - f(x_k^*)\cr\le -\frac{\mu}{2}\|x^*-p_k\|^2
+|f(p_k)-f(x_k^*)|,
\end{align*}
where in the last inequality is obtained using
\[\frac{\mu}{2}\|p_k-x^*\|^2 + \la \nabla f(x^*), p_k-x^*\ra +f(x^*)\le f(p_k),\]
and the fact $\la \nabla f(x^*), p_k-x^*\ra\ge0$, which holds since $p_k$ is feasible
and $x^*$ is the optimal point.
Since the subgradients of $f$ are uniformly bounded at $x^*_k$ and $p_k$,
we have
\[f(x^*)-f(x_k^*)\le -\frac{\mu}{2}\|x^*-p_k\|^2 + L \|p_k - x_k^*\|.\]
Combining the preceding inequality with relation~\eqref{eq:11}, we obtain
\begin{align*}
\frac{\mu}{2}&\|x^* -x_k^*\|^2\le -\frac{\mu}{2}\|x^*-p_k\|^2 + L \|p_k - x_k^*\|\cr
&+ \frac{\g_k}{m} \sum_{i=1}^{m} h_k\left(x^*; a_i, b_i\right)
-\frac{\g_k}{m} \sum_{i=1}^{m} h_k\left(x_k^*; a_i, b_i\right).
\end{align*}
Since $x^*$ is feasible, by relation~\eqref{eq:hfunineq1} (where $\d=\d_k$)
we have for all $i=1,\ldots,m$,
\[h_k(x^*; a_i,b_i) \le \frac{\d_k}{4\|a_i\|}\le \frac{\d_k}{4\a_{\min}}.\]
Hence, it follows that
\begin{align*}
 \frac{\mu}{2}\|x^* -x_k^*\|^2 + \frac{\mu}{2}\|x^*-p_k\|^2 \le& L \|p_k - x_k^*\|
+ \frac{\g_k\d_k}{4\a_{\min}}\cr
&-\frac{\g_k}{m} \sum_{i=1}^{m} h_k\left(x_k^*; a_i, b_i\right).
\end{align*}

By Lemma~\ref{lem:penalty}, where $\d=\d_k$, $a=a_i$, $b=b_i$, and $Y=X_i$,
we have
\[h_k(x_k^*;a_i,b_i)\ge \frac{1}{4}\dist(x_k^*,X_i)
\qquad\hbox{for all $i=1,\ldots,m$}.\]
Therefore,
\begin{align*}
 \frac{\mu}{2}\|x^* -x_k^*\|^2 + \frac{\mu}{2}\|x^*-p_k\|^2 \le& L \|p_k - x_k^*\|
+  \frac{\g_k\d_k}{4\a_{\min}}\cr
&-\frac{\g_k}{4m} \sum_{i=1}^{m} \dist(x_k^*,X_i).
\end{align*}
By Hoffman's lemma~\cite{Hoffman},
we have that there exists
$\beta=\beta(a_1,\ldots,a_m)>0$ for the sets $X_i$, $i=1,\ldots,m$, such that
$$
\beta \sum_{i=1}^{m} \dist(x, X_i) \ge \dist(x,X)\quad \hbox{for all } x\in\R^n.
$$
Hence,
\begin{align*}
\frac{\mu}{2}\|x^* -x_k^*\|^2 + \frac{\mu}{2}\|x^*-p_k\|^2 \le& L \|p_k - x_k^*\|
+ \frac{\g_k\d_k}{4\a_{\min}}\cr
&-\frac{\g_k}{4m\b} \dist(x_k^*,X).
\end{align*}
Since $p_k$ is the projection of $x_k^*$ on $X$, we have
$\|p_k-x_k^*\|=\dist(x_k^*,X)$ implying that
\begin{align*}\frac{\mu}{2}\|x^* -x_k^*\|^2 &+ \frac{\mu}{2}\|x^*-p_k\|^2
\cr&\le \left(L -\frac{\g_k}{4m\b}\right) \dist(x_k^*,X)
+ \frac{\g_k\d_k}{4\a_{\min}}.
\end{align*}
\end{proof}

Lemma~\ref{lema-sols} indicates that, when $\g_k\to+\infty$,
for all large enough $k$, we will have $\frac{\g_k}{4m\b}>0$,
implying that
\begin{align*}
\dist(x_k^*,X)&\le \frac{\g_k\d_k}{4\a_{\min} \left(\frac{\g_k}{4m\b}-L\right)}\cr
&=\frac{m\b\g_k\d_k}{\a_{\min}(\g_k-4m\b L)}
\approx O(\d_k).
\end{align*}
Thus, if $\d_k\to0$,  the distance of $x_k^*$ to the feasible set $X$ will go to 0
at the rate of  $O(\d_k)$.
Lemma~\ref{lema-sols} also indicates that, for large enough $k,$
\[\|x^* -x_k^*\|^2\le  \frac{\g_k\d_k}{2\mu\a_{\min}}.\]
Thus, if $\g_k\d_k\to0$, then the points $x_k^*$ approach
the optimal solution $x^*$ of the original problem, with the rate of
$O(\g_k\d_k)$.

To summarize, Lemma~\ref{lema-sols} characterizes the behavior of the sequence $\{x_k^*\}$
in terms of the penalty parameters $\{\g_k\}$ and $\{\d_k\}$. It shows that
under conditions $\g_k\to\infty$, $\d_k\to0$ and $\g_k\d_k\to0$, we have
$\|x_k^*-x^*\|\to0$. Based on Lemma~\ref{lema-sols}, one can construct a two-loop approach to compute
the optimal point $x^*$ of the original problem,
where for every outer loop $k$, we have an inner loop of iterations to compute $x_k^*$. This, however,
will be quite inefficient. In the next section, we propose a more efficient single-loop algorithm,
where at each iteration $k$ we use the gradient of the penalty function $F_k$.

\section{Convergence rate of incremental gradient algorithm}
The results of Lemma~\ref{lem-improve} and Lemma~\ref{lema-sols} are useful
for analyzing the convergence behavior of an incremental algorithm that,
when the iterate $x_k$ is available at iteration $k$,
uses only one randomly chosen constraint (indexed by $i_k$) to estimate
the gradient $\nabla F_k(x_k)$. \an{This estimation is employed to construct $x_{k+1}$},
as opposed to determining $x^*_k$ for each function $F_k$.
We illustrate this on a simple incremental gradient-based method, given by:
for $k\ge 1$,
\begin{equation}\label{eq:gradmet}
x_{k+1} = x_k-s_k [\nabla f(x_k) + \g_k\nabla h_k(x;a_{i_k},b_{i_k})],
\end{equation}
where $x_1$ is an initial point, $s_k>0$ is a stepsize, and the index $i_k$ is chosen uniformly at random.
Note that $\nabla f(x_k) + \g_k\nabla h_k(x;a_{i_k},b_{i_k})$ is an unbiased estimation of $\nabla F_k(x_k)$, \an{since by the choice of $i_k$ we have}
\[\ex{\nabla f(x_k) + \g_k\nabla h_k(x;a_{i_k},b_{i_k})}=\nabla F_k(x_k)
\ \ \hbox{for  $k\ge1$.}\]

The idea behind the analysis of the method~\eqref{eq:gradmet} is resting on a relation of the form
$\ex{\|x_{k+1} - x^*\|}\le q_k \ex{\|x_k - x^*\|}+ r_k$
for some $q_k$ and $r_k$ and explores the conditions on $q_k$ and
$r_k$, for which the following Chung's lemma \cite{Chung} ensures the convergence
of $\|x_k-x^*\|$ to 0, as $k\to\infty$ with some definite convergence rate.

\begin{lem}\label{lema-chung}
Let $\{u_k\}$ be a nonnegative scalar sequence \an{and $k_0$ be such that}
\[u_{k+1}\le \left(1-\frac{a}{k^s}\right) u_k+O\left(\frac{b}{k^{s+t}}\right)\qquad\hbox{ for all $k>k_0$,}\]
where $0<s\le 1$, $a>0$, $b>0$, and $t>0$.
Then, we have
\begin{align*}
u_k= O\left(\frac{1}{k^{t}}\right).
\end{align*}
\end{lem}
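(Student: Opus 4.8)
The plan is to prove the bound by induction, establishing that $u_k \le M/k^t$ holds for all sufficiently large $k$ with a suitably chosen constant $M$. First I would unpack the $O(\cdot)$ term: by its definition there exist a constant $C>0$ and an index $k_1 \ge k_0$ such that
\[
u_{k+1} \le \left(1 - \frac{a}{k^s}\right) u_k + \frac{C}{k^{s+t}} \qquad \text{for all } k \ge k_1,
\]
absorbing $b$ and the hidden constant into $C$. I would also take $k_1$ large enough that $1 - a/k^s \in (0,1)$, so that the recursion is a genuine contraction-plus-noise relation with nonnegative coefficients and monotonicity can be exploited.

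For the inductive step, assuming $u_k \le M/k^t$, I would substitute into the recursion and reduce the goal $u_{k+1}\le M/(k+1)^t$ to verifying
\[
\left(1 - \frac{a}{k^s}\right)\frac{M}{k^t} + \frac{C}{k^{s+t}} \le \frac{M}{(k+1)^t}.
\]
After rearranging, this is equivalent to $\frac{M}{k^t} - \frac{M}{(k+1)^t} \le \frac{aM - C}{k^{s+t}}$. The key elementary estimate is $\frac{1}{k^t} - \frac{1}{(k+1)^t} \le \frac{t}{k^{t+1}}$, which follows from the mean value theorem applied to $x \mapsto x^{-t}$. Using it, it suffices to verify $Mt \le (aM - C)\,k^{1-s}$.

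The hypothesis $0<s\le 1$ enters through a case split. For $s<1$ I would choose $M > C/a$ so that $aM - C > 0$; since $k^{1-s}\to\infty$, the inequality $Mt \le (aM-C)k^{1-s}$ holds for all $k$ beyond a threshold $K_0$, and this threshold can be taken uniform over all $M \ge 2C/a$, because there $aM-C\ge aM/2$ and hence $\frac{Mt}{aM-C}\le \frac{2t}{a}$. For $s=1$ the factor $k^{1-s}=1$, so the argument instead needs $a>t$ and $M \ge C/(a-t)$; I would flag this borderline regime, since the conclusion $O(1/k^t)$ requires that extra condition (their application uses $s=1/2$). To close the induction I then set $K = \max\{k_1, K_0\}$ and enlarge $M$ to also satisfy the base case $M \ge u_K K^t$; because enlarging $M$ beyond $2C/a$ does not spoil the step threshold, both requirements hold simultaneously.

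Finally, the induction yields $u_k \le M/k^t$ for every $k \ge K$, and the finitely many earlier terms are trivially $O(1/k^t)$, which gives the claim. The main obstacle I anticipate is precisely this coupled choice of $M$ and $K$: the induction-step threshold depends on $M$ through $\frac{Mt}{aM-C}$, so I must confirm that this quantity stays bounded as $M$ grows (it tends to $t/a$) in order to fix $K$ once and for all and then freely enlarge $M$ to absorb the initial value in the base case.
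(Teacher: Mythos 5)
Your induction argument is correct, and there is nothing in the paper to compare it against line by line: the authors state this lemma without proof, citing Chung's original work, so any self-contained argument is by definition a different route. What you give is the standard one: unpack the $O(\cdot)$, use the mean-value bound $k^{-t}-(k+1)^{-t}\le t\,k^{-t-1}$ to reduce the inductive step to $Mt\le (aM-C)k^{1-s}$, and decouple the choice of $M$ from the choice of the starting index by noting that $Mt/(aM-C)$ stays bounded (indeed decreases to $t/a$) as $M$ grows; that last observation is exactly the point where naive inductions go wrong, and you handle it cleanly. Two remarks. First, your caveat about $s=1$ is not a footnote but a genuine defect of the lemma as stated: when $s=1$ the conclusion $u_k=O(1/k^t)$ requires $a>t$ (for $a<t$ one only gets $O(1/k^a)$; e.g., $u_{k+1}=(1-1/k)u_k+1/k^{3}$ gives, upon setting $v_k=(k-1)u_k$, the recursion $v_{k+1}=v_k+1/k^2$, so $v_k$ converges to a positive limit and $u_k\sim c/k$, not $O(1/k^2)$). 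Second, your parenthetical ``their application uses $s=1/2$'' misreads Proposition 1: there the stepsize is $s_k=1/k$, so the lemma is invoked with $s=1$ and $t=1/2$ --- precisely the borderline regime you flagged --- and the missing condition $a>t$ becomes $\mu/2>1/2$, i.e.\ $\mu>1$, which the paper does not assume (it is repairable by rescaling the stepsize to $\theta/k$ with $\theta\mu/2>1/2$). So your proof is right, and the hypothesis you identify as missing matters more to the paper's own use of the lemma than your aside suggests.
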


With Lemma~\ref{lem-improve}, Lemma~\ref{lema-sols}, and Lemma~\ref{lema-chung} in place,
we next establish a set of conditions on $\{\g_k\}$ and $\{\d_k\}$
that ensure convergence of the iterates produced by the method~\eqref{eq:gradmet}.

\begin{prop}\label{prop:gradconv}
Let $f$ be strongly convex with a constant \hbox{$\mu>0$} and have Lipschitz continuous gradients
with a constant $L_f$.
Let the sequences $\{\g_k\}$ and $\{\d_k\}$ satisfy
\[\g_k = k^g, \, \d_k = \frac{1}{k^d},\]
where \an{$g>0$ and $d>0$ are such that $\{\g_k\d_k\}$ is nonincreasing}.
Consider the method~\eqref{eq:gradmet} with the stepsize $s_k = \frac{1}{k^s}$ with $s>0$.
Then, as $k\to\infty$,
\[\an{\E\|x_k - x^*\|^2} = O\left(\frac{1}{k^{\min\{s-2g,2-2s+2g\}}} + \frac{1}{k^{d-g}}\right).\]
In particular, when $s=1$, $g=\frac{1}{4}$, and $d\ge \frac{3}{4}$ the iterates $\{x_k\}$ the method~\eqref{eq:gradmet} converge to the solution $x^*$
of the original problem \an{(in expectation)} and
\[\an{\E\|x_k - x^*\|^2} = O\left(\frac{1}{k^{\frac{1}{2}}}\right).\]
\end{prop}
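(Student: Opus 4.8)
The plan is to control the iterates by their distance to the \emph{moving} minimizer $x_k^*$ of the penalized objective $F_k$, and only at the end transfer to $x^*$ through Lemma~\ref{lema-sols}. Since $f$ is $\mu$-strongly convex and every $h_k(\cdot;a_i,b_i)$ is convex, $F_k$ is $\mu$-strongly convex, so $x_k^*$ is its unique minimizer and $\nabla F_k(x_k^*)=\0$. First I would expand one step of~\eqref{eq:gradmet} relative to $x_k^*$: conditioning on the history $\mathcal{F}_k$ and using that the sampled direction is unbiased for $\nabla F_k(x_k)$,
\[
\E[\|x_{k+1}-x_k^*\|^2\mid\mathcal{F}_k]=\|x_k-x_k^*\|^2-2s_k\la\nabla F_k(x_k),x_k-x_k^*\ra+s_k^2\,\E[\|g_k\|^2\mid\mathcal{F}_k].
\]
Strong convexity gives $\la\nabla F_k(x_k),x_k-x_k^*\ra\ge\mu\|x_k-x_k^*\|^2$. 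The decisive point is to bound the last term \emph{without} the Lipschitz constant of $\nabla F_k$, which scales like $\g_k/\d_k$ and would swamp the contraction. Using $\|\nabla h_k\|\le1$ (Lemma~\ref{lem:pderiv}), the assumed $L_f$-Lipschitz continuity of $\nabla f$, and the boundedness of $x_k^*$ (Lemma~\ref{lema-solsbded}), I get $\E[\|g_k\|^2\mid\mathcal{F}_k]\le C_1+C_2\|x_k-x_k^*\|^2+C_3\g_k^2$ with $C_2$ a \emph{fixed} constant; absorbing $C_2s_k^2$ into the contraction yields, for all large $k$,
\[
\E[\|x_{k+1}-x_k^*\|^2\mid\mathcal{F}_k]\le(1-\mu s_k)\|x_k-x_k^*\|^2+O\!\big(s_k^2\g_k^2\big).
\]

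Next I would account for the motion of the target. From $\|x_{k+1}-x_{k+1}^*\|\le\|x_{k+1}-x_k^*\|+\|x_k^*-x_{k+1}^*\|$ and Lemma~\ref{lem-improve} with $\g_k=k^g$, $\d_k=k^{-d}$, the drift is $\|x_k^*-x_{k+1}^*\|=O(k^{g-1})$, since both $\g_{k+1}-\g_k$ and $\g_k\tfrac{\d_k-\d_{k+1}}{2\d_k}$ are $O(k^{g-1})$. Combining this with the displayed one-step estimate---via the $L^2$-triangle inequality together with a Young split whose parameter is tuned to $s_k$---produces a scalar recursion for $u_k:=\E\|x_k-x_k^*\|^2$ of the form
\[
u_{k+1}\le\Big(1-\frac{\mu}{k^{s}}\Big)u_k+O\!\Big(\frac{1}{k^{\,2s-2g}}\Big)+O\!\Big(\frac{1}{k^{\,2-s+2g}}\Big),
\]
whose first perturbation is the gradient noise $s_k^2\g_k^2$ and whose second encodes the drift. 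Chung's lemma (Lemma~\ref{lema-chung}) with $a=\mu$ and $0<s\le1$ then gives $u_k=O\big(k^{-(s-2g)}+k^{-(2-2s+2g)}\big)$.

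To finish, I would split $\E\|x_k-x^*\|^2\le2u_k+2\|x_k^*-x^*\|^2$ and read off from Lemma~\ref{lema-sols}, once $\tfrac{\g_k}{4m\b}>L$, the bound $\|x_k^*-x^*\|^2\le\tfrac{\g_k\d_k}{2\mu\a_{\min}}=O(k^{g-d})$. This gives the stated rate $O\big(k^{-\min\{s-2g,\,2-2s+2g\}}+k^{-(d-g)}\big)$. For the special case I simply substitute $s=1$, $g=\tfrac14$, $d\ge\tfrac34$: then $s-2g=\tfrac12$, $2-2s+2g=\tfrac12$ and $d-g\ge\tfrac12$, while the standing hypotheses hold ($\g_k\d_k=k^{g-d}$ is nonincreasing because $g\le d$, and $s=1$ both keeps $1-\mu s_k\in(0,1)$ for large $k$ and meets the requirement $s\le1$ of Lemma~\ref{lema-chung}), so $\E\|x_k-x^*\|^2=O(k^{-1/2})$.

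The step I expect to be the main obstacle is the coupling, in the middle paragraph, of the stochastic-gradient noise with the motion of the target. The minimizer $x_k^*$ shifts by $O(k^{g-1})$ per iteration, and a naive cross-term estimate introduces a factor $1/s_k=k^{s}$ multiplying the squared drift, which degrades---and at $s=1$, $g=\tfrac14$ would outright destroy---the rate; forcing the drift perturbation to enter only at the order $k^{-(2-s+2g)}$ that the stated bound requires is the delicate part, demanding a careful balance between the contraction factor and the perturbation. A secondary but essential subtlety (already used above) is keeping the coefficient of $\|x_k-x_k^*\|^2$ in the second-moment bound independent of $k$, which is precisely why the Proposition strengthens the hypothesis on $f$ to Lipschitz-continuous gradients.
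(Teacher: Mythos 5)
Your proposal follows the paper's proof step for step: the same one-step expansion around the moving minimizer $x_k^*$, the same use of $\|\nabla h_k\|\le 1$ and the Lipschitz gradient of $f$ to keep the second-moment bound at $O(1)+O(\g_k^2)$ plus a fixed multiple of $\|x_k-x_k^*\|^2$, the same Young split with weights $(1+s_k\mu)$ and $(1+s_k^{-1}\mu^{-1})$ to pass from $x_k^*$ to $x_{k+1}^*$, Lemma~\ref{lem-improve} for the drift, Chung's lemma for the resulting recursion, and Lemma~\ref{lema-sols} to convert $\E\|x_k-x_k^*\|^2$ into $\E\|x_k-x^*\|^2$. Everything up to the drift term is correct and matches the paper up to constants.

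The gap is exactly where you predicted it, and you have not closed it. Lemma~\ref{lem-improve} gives $\|x_k^*-x_{k+1}^*\|=O(k^{g-1})$, and the term $\g_{k+1}-\g_k\approx g\,k^{g-1}$ alone shows this order cannot be improved from that lemma. Hence the drift contribution to the recursion is
\[
(1+s_k^{-1}\mu^{-1})\,\|x_k^*-x_{k+1}^*\|^2 = O\!\left(k^{s}\cdot k^{2g-2}\right)=O\!\left(k^{-(2-s-2g)}\right),
\]
not $O\!\left(k^{-(2-s+2g)}\right)$ as you assert. At $s=1$, $g=\tfrac14$ this is $\Theta(k^{-1/2})$, and the recursion $u_{k+1}\le\left(1-\tfrac{\mu}{2}k^{-1}\right)u_k+\Theta(k^{-1/2})$ does not satisfy the hypotheses of Lemma~\ref{lema-chung}, which require the perturbation to be $o(k^{-s})$; unrolling it yields only the bound $u_k=O(k^{1/2})$. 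No retuning of the Young parameter helps: preserving the contraction forces that parameter to be $O(s_k)$, hence its reciprocal to be $\Omega(k^{s})$. You should also know that the paper's own proof does not supply the argument you defer: in the corresponding display it writes the drift coefficient as $\frac{1}{k^{2g-s}}[\cdots]^2$, whereas factoring $k^{g}$ out of the bracket, squaring, and dividing by $s_k$ gives $\frac{1}{k^{-2g-s}}[\cdots]^2$, so a factor $k^{4g}$ has been dropped there. As it stands, this proof architecture only establishes $\E\|x_k-x^*\|^2=O\!\left(k^{-\min\{s-2g,\,2-2s-2g\}}+k^{-(d-g)}\right)$, which is vacuous at the advertised parameters; repairing the claim would require either a sharper bound on $\|x_k^*-x_{k+1}^*\|$ than Lemma~\ref{lem-improve} provides or a different way of coupling the iterates to the drifting minimizers.
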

\begin{proof}
For any $k\ge0$, for the iterates of the method we have
\begin{align*}
 \|x_{k+1}- x_k^*\|^2 =\|x_k - x_k^*\|^2 &-2s_k\la g_k(x_k),x_k-x_k^*\ra\cr
 &+s_k^2\| g_k(x_k)\|^2,
\end{align*}
where $g_k(x_k)=\nabla f(x_k) + \g_k\nabla h_k(x;a_{i_k},b_{i_k})$.
By the strong convexity of $F_k$ and the fact $\nabla F_k(x_k^*)=0$, it follows that
\begin{equation}
\label{eq-main1}
\E\|x_{k+1}- x_k^*\|^2 \le (1-2s_k\mu)\E\|x_k - x_k^*\|^2 +s_k^2\E\|g_k(x_k)\|^2.
\end{equation}
For $\|g_k(x_k)\|^2$
we write
\begin{align*}\E\|g_k(x_k)\|^2
\le 2\E\|\nabla f(x_k)\|^2 +2\E\left\|\g_k \nabla h_k\left(x_k; a_{i_k}, b_{i_k}\right) \right\|\cr
 \le 2\E\|\nabla f(x_k)\|^2 +2\g_k^2,
\end{align*}
where the last inequality is obtained by using the convexity of the squared-norm function
and the fact that $\|\nabla h_k\left(x; a_i, b_i\right) \|\le 1$ for any $x$ and $i$ (see Lemma~\ref{lem:pderiv}).
We further estimate $\E\|\nabla f(x_k)\|^2$ as follows:
\begin{align*}
\E\|\nabla f(x_k)\|^2\le &2\E\|\nabla f(x_k)-\nabla f(x^*)\|^2 +2\|\nabla f(x^*)\|^2\cr
\le &2L_f^2\E\|x_k-x^*\|^2 +2\|\nabla f(x^*)\|^2,
\end{align*}
where in the last inequality we use the Lipschitz gradient property of $f$.
Thus,
\[\E\|g_k(x_k)\|^2\le 4L_f^2\E\|x_k-x^*\|^2 +4\|\nabla f(x^*)\|^2 +2\g_k^2.\]
Further, we have
\[\E\|x_k-x^*\|^2\le 2\E\|x_k-x_k^*\|^2 +2\|x_k^*-x^*\|^2,\] so that
\begin{align*}
 \E\|g_k(x_k)\|^2\le 8L_f^2\E\|x_k-x_k^*\|^2&+ 8L_f^2\|x_k^*-x^*\|^2 \cr
 &+4\|\nabla f(x^*)\|^2 +2\g_k^2.
\end{align*}
By Lemma~\ref{lema-sols} we have
\begin{equation}\label{eq-estimk}
\|x_k^*-x^*\|^2\le \frac{\g_k\d_k}{2\mu\a_{\min}}.\end{equation}
By combining the preceding two relations with relation~\eqref{eq-main1}
we obtain
\begin{align*}
\E\|x_{k+1}- x_k^*\|^2
\le &(1-2s_k\mu +8L_f^2 s_k^2)\E\|x_k - x_k^*\|^2 \cr
&+s_k^2\left(\frac{4L_f^2   \g_k\d_k}{\mu\a_{\min}}+4\|\nabla f(x^*)\|^2 +2\g_k^2\right).
\end{align*}
We next consider $\|x_{k+1}- x_{k+1}^*\|^2$ for which we write
\begin{align*}\|x_{k+1}- x_{k+1}^*\|^2 \le &(1+s_k\mu)\|x_{k+1}- x_k^*\|^2 \cr
&+(1+s_k^{-1}\mu^{-1})\|x_k^* - x_{k+1}^*\|^2.\end{align*}
Combining the preceding two relations, we obtain
\begin{align}\label{eq-main2}
&\E\|x_{k+1}- x_{k+1}^*\|^2\cr
&\le (1+s_k\mu) (1-2s_k\mu +8L_f^2 s_k^2)\E\|x_k - x_k^*\|^2 \cr
&+(1+s_k\mu)s_k^2\left(\frac{2L_f^2   \g_k\d_k}{\mu\a_{\min}}+4\|\nabla f(x^*)\|^2 +2\g_k^2\right)\cr
&+ (1+s_k^{-1}\mu^{-1})\|x_k^* - x_{k+1}^*\|^2.
\end{align}

Next we use Lemma~\ref{lem-improve} to upper bound $\|x_k^*- x_{k+1}^*\|^2$. Thus, we obtain for large enough $k$,
\begin{align}\label{eq-main3}
&\E\|x_{k+1}- x_{k+1}^*\|^2\cr
&\le (1+s_k\mu) (1-2s_k\mu +8L_f^2 s_k^2)\E\|x_k - x_k^*\|^2 \cr
&\quad+(1+s_k\mu)s_k^2\left(\frac{2L_f^2   \g_k\d_k}{\mu\a_{\min}}+4\|\nabla f(x^*)\|^2 +2\g_k^2\right)\cr
&\quad+ \frac{1+s_k^{-1}\mu^{-1}}{\mu^2}\left(\g_{k+1}-\g_k+\g_k\,\frac{\d_k-\d_{k+1}}{2\d_k}\right)^2.
\end{align}

The rest of the proof is verifying that Lemma~\ref{lema-chung} can be applied to the
preceding inequality. Indeed, let
\[u_k=\E\|x_k - x_k^*\|^2,\qquad
q_k=(1+s_k\mu) (1-2s_k\mu +8L_f^2 s_k^2),\]
\begin{align*}
r_k=&(1+s_k\mu)s_k^2\left(\frac{2L_f^2   \g_k\d_k}{\mu\a_{\min}}+4\|\nabla f(x^*)\|^2 +2\g_k^2\right)\cr
&+  \frac{1+s_k^{-1}\mu^{-1}}{\mu^2}\left(\g_{k+1}-\g_k+\g_k\,\frac{\d_k-\d_{k+1}}{2\d_k}\right)^2.
\end{align*}
Consider the coefficient $q_k$, for which we have for sufficiently large $k$,
\begin{align*}
q_k
&=1-2s_k\mu +8L_f ^2s_k^2
+s_k\mu -2s_k^2\mu^2+ 8L_f^2 \mu s_k^3\cr
&= 1- s_k\mu +(8L_f^2 -2\mu^2)s_k^2
+ 8L_f^2 \mu s_k^3\cr
&\ge 1- \frac{\mu}{2}s_k,\end{align*}
where in the last inequality we use the fact that $s_k \to 0$ as $k\to\infty$.
For the coefficient $r_k$, since $\g_k\d_k$ is nonincreasing and $\g_k\to\infty$
we have for large enough $k$,
\[r_k\approx O(s_k^2\g_k^2) +O\left(\frac{\left(\g_{k+1}-\g_k+\g_k\,\frac{\d_k-\d_{k+1}}{2\d_k}\right)^2}{s_k}\right).\]
Next, taking into account the settings $s_k = \frac{1}{k^s}$, $\g = {k^g}$, $\d = \frac{1}{k^d}$, we obtain
\begin{align*}
  &u_{k+1}\le \left(1-\frac{\mu}{2}\frac{1}{k^s}\right) u_k + O\left(\frac{1}{k^{2s-2g}}\right) \\
   &  +O\left(\frac{1}{k^{2g-s}}\left[\left(1+\frac{1}{k}\right)^g-1+\frac{1-\left(1-\frac{1}{k+1}\right)^d}{2}\right]^2\right).
\end{align*}
Due to the fact that $\left(1+\frac{1}{k}\right)^g\equiv 1 + \frac{g}{k}$ and $\left(1-\frac{1}{k+1}\right)^d\equiv 1 - \frac{d}{k+1}$, we conclude that
\begin{align*}
  u_{k+1}\le \left(1-\frac{\mu}{2}\frac{1}{k^s}\right) u_k + O\left(\frac{1}{k^{2s-2g}} + \frac{1}{k^{2 -s+2g}}\right). \\
\end{align*}
Next, we write
\[\|x_k-x^*\|^2\le 2\|x_k-x_k^*\|^2 + 2\|x_k^*-x^*\|^2,\]
which together with \eqref{eq-estimk} implies
\[\E\|x_k - x^*\|^2 = O\left(\frac{1}{k^{\min\{s-2g,2-2s+2g\}}} + \frac{1}{k^{d-g}}\right).\]
By optimizing the parameters $s$, $g$, and $d$ , we get $s=1$, $g=\frac{1}{4}$, and $d\ge \frac{3}{4}$. Under this setting
\[\E\|x_k - x^*\|^2 = O\left(\frac{1}{k^{\frac{1}{2}}}\right),\]
and the iterates $\{x_k\}$ the method~\eqref{eq:gradmet}
converge, \an{in the expectation}, to the solution $x^*$ of the original problem.

\end{proof}




\section{Conclusion}
In this work we considered penalty reformulation of optimization problems
with strongly convex objectives and linear constraints.
We proposed using Huber losses as penalty functions.
The properties of these functions allowed us to set up the penalty parameter and the step-size of the standard incremental gradient-based optimization procedure to guarantee convergence to the solution.
Moreover, we provided the estimation of the convergence rate for this algorithm.
In the future work, we will investigate applicability of accelerated incremental algorithms for
the proposed penalty reformulation in the case of both strongly and non-strongly convex optimization.
 \bibliographystyle{plain}
\bibliography{Literature}

\begin{thebibliography}{10}

\bibitem{filter}
J.~W. Adams.
\newblock $\mbox{FIR}$ digital filters with least-squares stopbands subject to
  peak-gain constraints.
\newblock {\em IEEE Transactions on Circuits and Systems}, 38(4):376--388, Apr
  1991.

\bibitem{BertsekasConstrOpt}
D.~P. Bertsekas.
\newblock {\em Constrained Optimization and Lagrange Multiplier Methods
  (Optimization and Neural Computation Series)}.
\newblock Athena Scientific, 1 edition, 1996.

\bibitem{Ber96}
D.~P. Bertsekas.
\newblock Incremental least squares methods and the extended {K}alman filter.
\newblock {\em SIAM Journal on Optimization}, 6:807--822, 1996.

\bibitem{Ber97}
D.~P. Bertsekas.
\newblock A hybrid incremental gradient method for least squares.
\newblock {\em SIAM Journal on Optimization}, 7:913--926, 1997.

\bibitem{Bertsekas2011}
D.~P. Bertsekas.
\newblock Incremental proximal methods for large scale convex optimization.
\newblock {\em Mathematical Programming}, 129(2):163--195, 2011.

\bibitem{BertsekasPenalty}
D.~P. Bertsekas.
\newblock Incremental gradient, subgradient, and proximal methods for convex
  optimization: {A} survey.
\newblock available on arxiv at https://arxiv.org/abs/1507.01030, 2015.

\bibitem{Chung}
K.~L. Chung.
\newblock On a stochastic approximation method.
\newblock {\em Ann. Math. Statist.}, 25(3):463--483, 09 1954.

\bibitem{SmartG}
G.~Dorini, P.~Pinson, and H.~Madsen.
\newblock Chance-constrained optimization of demand response to price signals.
\newblock {\em IEEE Transactions on Smart Grid}, 4(4):2072--2080, Dec 2013.

\bibitem{GGM06}
M.~Gaudioso, G.~Giallombardo, and G.~Miglionico.
\newblock An incremental method for solving convex finite min-max problems.
\newblock {\em Mathematics of Operations Research}, 31:173--187, 2006.

\bibitem{Gri94}
L.~Grippo.
\newblock A class of unconstrained minimization methods for neural network
  training.
\newblock {\em Optimization Methods and Software}, 4:135--150, 1994.

\bibitem{Gri00}
L.~Grippo.
\newblock Convergent on-line algorithms for supervised learning in neural
  networks.
\newblock {\em IEEE Transactions on Neural Networks}, 11:1284--1299, 2000.

\bibitem{Hoffman}
O.~Guler, A.J. Hoffman, and U.G. Rothblum.
\newblock {\em Approximations to Solutions to Systems of Linear Inequalities}.
\newblock DIMACS technical report. DIMACS, Center for Discrete Mathematics and
  Theoretical Computer Science, 1992.

\bibitem{HeD09}
E.~S. Helou and A.~R. De~Pierro.
\newblock Incremental subgradients for constrained convex optimization, a
  unified framework and new methods.
\newblock {\em SIAM Journal on Optimization}, 20:1547--1572, 2009.

\bibitem{JRJ09}
B.~Johansson, M.~Rabi, and M.~Johansson.
\newblock A randomized incremental subgradient method for distributed
  optimization in networked systems.
\newblock {\em SIAM Journal on Optimization}, 20:1157--1170, 2009.

\bibitem{Kibardin}
V.~M. Kibardin.
\newblock Decomposition into functions in the minimization problem.
\newblock {\em Automation and Remote Control}, 40:1311--1323, 1980.

\bibitem{Kiw04}
K.~C. Kiwiel.
\newblock Convergence of approximate and incremental subgradient methods for
  convex optimization.
\newblock {\em SIAM Journal on Optimization}, 14:807--840, 2004.

\bibitem{Kundu2017}
A.~Kundu, F.\ Bach, and C.\ Bhattacharyya.
\newblock Convex optimization over intersection of simple sets: improved
  convergence rate guarantees via an exact penalty approach.
\newblock available on arxiv at https://arxiv.org/abs/1710.06465, 2017.

\bibitem{Huber}
W.~Li and J.~Swetits.
\newblock The linear l1 estimator and the huber m-estimator.
\newblock {\em SIAM Journal on Optimization}, 8(2):457--475, 1998.

\bibitem{Luo91}
Z.~Q. Luo.
\newblock On the convergence of the lms algorithm with adaptive learning rate
  for linear feedforward networks.
\newblock {\em Neural Computation}, 3:226--245, 1991.

\bibitem{LuT94}
Z.~Q. Luo and P.~Tseng.
\newblock Analysis of an approximate gradient projection method with
  applications to the backpropagation algorithm.
\newblock {\em Optimization Methods and Software}, 4:85--101, 1994.

\bibitem{clustering}
C.~Mathieu and W.~Schudy.
\newblock Correlation clustering with noisy input.
\newblock In {\em {SODA}}, pages 712--728. {SIAM}, 2010.

\bibitem{Nedich2011}
A.~Nedi{\'{c}}.
\newblock Random algorithms for convex minimization problems.
\newblock {\em Mathematical Programming}, 129(2):225--253, Oct 2011.

\bibitem{NeB00}
A.~Nedi\'c and D.~P. Bertsekas.
\newblock Convergence rate of the incremental subgradient algorithm.
\newblock In S.~Uryasev and P.~M. Pardalos, editors, {\em Stochastic
  Optimization: Algorithms and Applications}, pages 263--304. Kluwer Academic
  Publishers, 2000.

\bibitem{NeB01}
A.~Nedi\'c and D.~P. Bertsekas.
\newblock Incremental subgradient methods for nondifferentiable optimization.
\newblock {\em SIAM Journal on Optimization}, 12:109--138, 2001.

\bibitem{NeBBor01}
A.~Nedi\'c, D.~P. Bertsekas, and V.~Borkar.
\newblock Distributed asynchronous incremental subgradient methods.
\newblock In D.~Butnariu, Y.~Censor, and S.~Reich, editors, {\em Inherently
  Parallel Algorithms in Feasibility and Optimization and their Applications}.
  Elsevier, Amsterdam, Netherlands, 2001.

\bibitem{Siedlecki}
W.~Siedlecki and J.~Sklansky.
\newblock Constrained genetic optimization via dynamic reward-penalty balancing
  and its use in pattern recognition.
\newblock In {\em Proceedings of the Third International Conference on Genetic
  Algorithms}, pages 141--150, San Francisco, CA, USA, 1989. Morgan Kaufmann
  Publishers Inc.

\bibitem{Sol98}
M.~V. Solodov.
\newblock Incremental gradient algorithms with stepsizes bounded away from
  zero.
\newblock {\em Comput. Opt. Appl.}, 11:28--35, 1998.

\bibitem{Penalty_arxiv}
T.\ Tatarenko and A.\ Nedi\'c.
\newblock A smooth inexact penalty reformulation of convex problems with linear
  constraints.
\newblock available on arxiv at https://arxiv.org/abs/1808.07749, 2018.

\bibitem{Tse98}
P.~Tseng.
\newblock An incremental gradient(-projection) method with momentum term and
  adaptive stepsize rule.
\newblock {\em SIAM Journal on Optimization}, 8:506--531, 1998.

\bibitem{WangBertsSM}
M.~Wang and D.~P. Bertsekas.
\newblock Incremental constraint projection methods for variational
  inequalities.
\newblock {\em Mathematical Programming}, 150(2):321–363, 2015.

\bibitem{Wright08}
S.~J. Wright, R.~D. Nowak, and M.~A.~T. Figueiredo.
\newblock Sparse reconstruction by separable approximation.
\newblock In {\em Proceedings of the IEEE International Conference on
  Acoustics, Speech and Signal Processing (ICASSP)}, pages 3373--3376, 2008.

\bibitem{matching}
M.~Zaslavskiy, F.~Bach, and J.~P. Vert.
\newblock A path following algorithm for the graph matching problem.
\newblock {\em IEEE Transactions on Pattern Analysis and Machine Intelligence},
  31(12):2227--2242, Dec 2009.

\end{thebibliography}

\end{document}